\documentclass[12pt]{amsart}

\usepackage[utf8]{inputenc}
\usepackage{tikz, calc}
\usepackage[justification=centering]{caption}
\usepackage{subcaption}
\usepackage{amssymb, amsthm, fullpage}
\usepackage{graphicx}
\usepackage{verbatim}

\usepackage{algorithm}
\usepackage{algpseudocode}
\usepackage{soul}
\usepackage{url}
\usepackage{longtable}

\newtheorem{definition}{Definition}

\newtheorem{theorem}{Theorem}
\newtheorem{proposition}{Proposition}

\newcounter{x}
\newcounter{y}
\newcounter{x2}
\newcounter{y2}

\newcommand{\vertex}[4][black,]{
    \filldraw[#1] (#2, #3) circle (3pt) node[anchor=west]{#4};
}
\newcommand{\tikzbox}[4][thick,-]{
    \setcounter{x}{#2}
    \setcounter{y}{#3}
    \setcounter{x2}{#2 + #4}
    \setcounter{y2}{#3 + #4}
    
    \draw[#1] (\arabic{x},\arabic{y}) -- (\arabic{x2},\arabic{y2});
    
    \setcounter{y}{#3 + #4 + #4}
    \draw[#1] (\arabic{x2},\arabic{y2}) -- (\arabic{x},\arabic{y});
    
    \setcounter{x2}{#2 - #4}
    \draw[#1] (\arabic{x},\arabic{y}) -- (\arabic{x2},\arabic{y2});
    
    \setcounter{y}{#3}
    \draw[#1] (\arabic{x2},\arabic{y2}) -- (\arabic{x},\arabic{y});
}
\newcommand{\grid}[3][step=1cm, gray, very thin,]{
    \draw[#1] (-0.9,-0.9) grid (#2 + 0.9, #3 + 0.9);
}
\newcommand{\tower}[5][thick, black]{
    \tikzbox{#2}{#3-#4}{#4};
    \vertex{#2}{#3}{#5};
}

\newcommand{\T}{
    \mathbb{T}
}

\begin{document}

\title{Computing upper bounds for optimal density \\of $(t,r)$ broadcasts on the infinite grid}

\author{Benjamin F. Drews}
\address{Department of Mathematics and Statistics, Williams College, United States}
\email{bfd2@williams.edu}
\thanks{}

\author{Pamela E. Harris}
\address{Department of Mathematics and Statistics, Williams College, United States}
\email{peh2@williams.edu}
\thanks{P.\,E. Harris was supported by NSF award DMS-1620202.}

\author{Timothy W. Randolph}
\address{Department of Mathematics and Statistics, Williams College, United States}
\email{twr2@williams.edu}
\thanks{}

\keywords{Domination, Broadcasts, Grid graphs}
\date{\today}

\maketitle
\begin{abstract}
The domination number of a finite graph $G$ with vertex set $V$ is the cardinality of the smallest set $S\subseteq V$ such that for every vertex $v\in V$ either $v\in S$ or $v$ is adjacent to a vertex in $S$. A set $S$ satisfying these conditions is called a \emph{dominating set}. In 2015 Blessing, Insko, Johnson, and Mauretour introduced \emph{$(t,r)$ broadcast domination}, a generalization of graph domination parameterized by the nonnegative integers $t$ and $r$. In this setting, we say that the \emph{signal} a vertex $v\in V$ receives from a tower of strength $t$ located at vertex $T$ is defined by $sig(v,T)=max(t-dist(v,T),0)$. Then a \emph{$(t,r)$ broadcast dominating set} on $G$ is a set $S\subseteq V$ such that the sum of all signal received at each vertex $v \in V$ is at least $r$.  In this paper, we consider $(t,r)$ broadcasts of the infinite grid and present a Python program to compute upper bounds on the minimal density of a $(t,r)$ broadcast on the infinite grid. These upper bounds allow us to construct counterexamples to a conjecture by Blessing et al. that the $(t,r)$ and $(t+1, r+2)$ broadcasts are equal whenever $t,r\geq 1$.
\end{abstract}

\section{Introduction}

Let $G$ be a finite graph with vertex set $V$ and edge set $E$ on which the distance between two vertices $u$ and $v$ in $V$, denoted $dist(u,v)$, is defined as the length of the shortest path in $G$ between $u$ and $v$. A set $S\subseteq V$ is called a dominating set of $G$ if for any vertex $v\in V$ either $v\in S$ or $d(u,v)=1$ for some $u\in S$. The cardinality of a smallest dominating set is called the domination number of $G$ and is denoted $\delta(G)$.

We let $G_{m,n}$ denote the \emph{finite grid graph} of dimension $m\times n$. More precisely,
\begin{align*}
G_{m,n} &= (V,E) \ with \\
V &= \{v_{i, j} \ :\ 1\leq i\leq n, 1\leq j\leq m\} \\
E &= \{(v_{i,j}, v_{i+1,j}), (v_{i,j}, v_{i,j+1}) \ :\ 1 \leq i < m, 1 \leq j < n \}.
\end{align*}
Determining the domination number of finite graphs, in particular grid graphs, has received much attention in the graph theory literature; for an overview of the subject, refer to \cite{haynes1998fundamentals}. The 2011 work of Gon{\c{c}}alves, Pinlou, Rao, and Thomass{\'e} \cite{gonccalves2011domination} finally confirmed in the affirmative Chang's 1992 conjecture \cite{chang1992domination} that for every $16 \leq n \leq m$, 
$ \delta(G_{m,n}) = \left \lceil{\frac{(n+2)(m+2)}{5}}\right \rceil$,
thereby establishing the domination number for all finite grid graphs.

Other work in this area expands domination theory by considering variations on domination and includes work on $r$-domination and broadcast domination \cite{dunbar2006broadcasts, griggs1992r}. In 2015, Blessing, Insko, Johnson, and Mauretour introduced $(t,r)$ broadcast domination, another generalization of graph domination defined by the nonnegative integral parameters $r$ and $t$ \cite{blessing2015t}. In this setting, we say that the signal a vertex $v\in V$ receives from a tower of strength $t$ located at vertex $T$ is defined by $sig(v,T)=max(t-dist(v,T),0)$. A $(t,r)$ broadcast dominating set on $G$ is a set $S\subseteq V$ such that the sum of all signal received at each vertex $v \in V$ is at least $r$. The cardinality of the smallest $(t,r)$ broadcast on a finite graph $G$ is called the $(t,r)$ broadcast domination number of $G$. Note that the $(2,1)$ broadcast domination number of a graph $G$ is exactly the classical domination number of $G$.

In this paper, we consider $(t,r)$ broadcasts on the integer lattice graph $G_{\infty}=\mathbb{Z}\times\mathbb{Z}$, which we refer to as the infinite grid. Since the cardinality of any $(t,r)$ broadcast $\T$ on $G_\infty$ will be infinite, we instead compute the \emph{density} of a $(t,r)$ broadcast, which is intuitively defined as the proportion of the vertices of $G_{\infty}$ contained in a $(t,r)$ broadcast $\T$ \cite{DHR}.

\begin{definition}
Given a $(t,r)$ broadcast $\T$ on $G_\infty$, consider the vertex set $V$ of the subgraph $G_{2n+1, 2n+1}$ with its central vertex located at $(0, 0)$. Then the broadcast \emph{density} of a $(t,r)$ broadcast on $G_\infty$ is defined as $$\lim_{n \to \infty} \frac{|\T \cap V|}{|V|}.$$
\end{definition}
The \emph{optimal density} of a $(t,r)$ broadcast on $G_\infty$, denoted $\delta_{t,r}(G_{\infty})$, is the minimum broadcast density over all $(t,r)$ broadcasts and we say that a $(t,r)$ broadcast is \emph{optimal} if its density is optimal.
In previous work, the authors have established the optimal densities of $(t,r)$ broadcasts for all $t\geq 1$ and  $r=1,2$ \cite{DHR}.

\begin{theorem}[Theorems 1, 2, 3 in \cite{DHR}]\label{thm:main} If $t$, $r \in \mathbb{Z}^+$, then
\begin{itemize}
\item $\delta_{t,1}(G_{\infty}) = \frac{1}{2t^2 - 2t + 1}$
\item $\delta_{t,2}(G_{\infty}) = \begin{cases}\frac{1}{3}&\mbox{if $t=2$}\\\frac{1}{2(t-1)^2}&\mbox{if $t>2$}\end{cases}$
\item $\delta_{t,3}(G_\infty) \ \leq \ \delta_{t-1,1}(G_\infty)$.
\end{itemize}
\end{theorem}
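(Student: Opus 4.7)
The plan is to establish the three items separately: for the first two I would combine a density lower bound (by counting total signal over a large window) with a matching upper bound via an explicit periodic tower configuration, while the third item is an inequality that I would obtain by showing that any $(t-1,1)$ broadcast is automatically a $(t,3)$ broadcast on the same tower set.

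For the first item, I first observe that a tower $T$ delivers signal at least $1$ to $v$ exactly when $dist(v,T) \le t-1$, so a $(t,1)$ broadcast is precisely a distance-$(t-1)$ dominating set. Since the $L^1$-ball of radius $t-1$ contains $2t^2-2t+1$ lattice points, averaging over a large $(2n+1) \times (2n+1)$ window immediately gives $\delta_{t,1}(G_\infty) \ge 1/(2t^2-2t+1)$. For the matching upper bound I would place towers on the sublattice of $G_\infty$ generated by $(t,t-1)$ and $(t-1,-t)$, whose index in $\mathbb{Z} \times \mathbb{Z}$ is exactly $2t^2-2t+1$, and verify directly that the diamonds of radius $t-1$ centered at these lattice points tile the plane, so that density $1/(2t^2-2t+1)$ is achieved while every vertex lies within distance $t-1$ of some tower.

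For the second item the analysis splits on $t$, since signal contributions from multiple towers must now combine nontrivially. When $t = 2$, a vertex is dominated if and only if it is a tower or has at least two tower neighbors; I would exhibit the diagonal-stripe pattern of density $1/3$ and prove a matching lower bound by a local argument on small windows. When $t \ge 3$ the matching configuration is built from \emph{tower pairs} (two adjacent towers whose joint coverage region tiles the plane) and achieves density $1/(2(t-1)^2)$. The main obstacle in the program is the lower bound for $t \ge 3$, which I would attack with a discharging argument that distributes each tower's total signal as credit to nearby vertices and carefully handles the overlap between the coverage regions of the two towers inside a pair.

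For the third item I would prove the stronger statement that for $t \ge 2$, every $(t-1,1)$ broadcast $\T$ is already a $(t,3)$ broadcast (the case $t=2$ being trivial, since $\delta_{1,1}(G_\infty) = 1$). Fix a vertex $v$ and let $T \in \T$ minimize $d := dist(v,T)$; by the $(t-1,1)$ property, $d \le t-2$. If $d \le t-3$, then $T$ alone contributes signal $t-d \ge 3$ at $v$ in the $(t,\cdot)$ setting and we are done. Otherwise $d = t-2 \ge 1$, so $v \neq T$ and $v$ has a lattice neighbor $v'$ at distance $d+1 = t-1$ from $T$ (take a unit step away from $T$ along any coordinate axis in which $v - T$ has a nonzero component). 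Since $v'$ receives zero signal from $T$ in the $(t-1,1)$ broadcast, there must exist some other tower $T' \neq T$ with $dist(v', T') \le t-2$; by the triangle inequality $dist(v,T') \le t-1$, so $T'$ contributes at least signal $1$ at $v$, bringing the total received at $v$ to at least $(t-d) + 1 = 3$.
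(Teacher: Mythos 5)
First, a point of comparison: the paper does not prove this theorem at all --- it is imported verbatim from the authors' earlier work \cite{DHR} (``Theorems 1, 2, 3 in \cite{DHR}''), so there is no in-paper proof to measure you against. Judged on its own terms, your treatment of the first and third items is essentially complete and correct. For the first item, the reduction of $(t,1)$ broadcasting to distance-$(t-1)$ domination, the count $2t^2-2t+1$ for the $L^1$-ball, the averaging lower bound, and the perfect diamond tiling on the index-$(2t^2-2t+1)$ sublattice generated by $(t,t-1)$ and $(t-1,-t)$ (which is exactly the standard pattern $\T(2t^2-2t+1,\,2t-1)$ appearing in Table \ref{bigtable}) all check out. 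Your argument for the third item is clean and actually proves the stronger and more informative statement that every $(t-1,1)$ broadcast is a $(t,3)$ broadcast: the nearest tower $T$ contributes $t-d\geq 2$, and if $d=t-2$ the neighbor $v'$ one step further from $T$ forces a second tower $T'$ with $dist(v,T')\leq t-1$, contributing the missing unit of signal. The verification that $T'\neq T$ (since $dist(v',T)=t-1>t-2\geq dist(v',T')$) is implicit but sound.

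The genuine gap is the second item. The entire difficulty of $\delta_{t,2}(G_\infty)=\frac{1}{2(t-1)^2}$ for $t>2$ lies in the lower bound, and your proposal reduces it to ``a discharging argument that distributes each tower's total signal as credit to nearby vertices.'' That is a research plan, not a proof: a naive signal-counting bound of the kind that works for $r=1$ gives only $\delta_{t,2}\geq 2/(\text{usable signal})$, which is strictly weaker than $1/(2(t-1)^2)$, and the whole problem is to show that the unavoidable waste (signal exceeding $2$ near towers, signal spilled onto already-satisfied vertices) cannot be eliminated below the claimed threshold. No candidate discharging rule is given, and this is precisely the step the paper itself flags as hard (``the methods employed in \cite{DHR}\dots do not easily extend''). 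Your proposed extremal configuration is also suspect: the known optimal $(t,2)$ broadcasts are sublattice (standard) patterns of index $2(t-1)^2$ --- e.g.\ $\T(8,3)$, $\T(18,5)$, $\T(32,7)$ in Table \ref{bigtable} --- not configurations of adjacent ``tower pairs,'' and you do not verify that the joint coverage region of an adjacent pair tiles the plane with $4(t-1)^2$ vertices per pair. The $t=2$ lower bound of $\frac13$ is likewise only asserted via ``a local argument on small windows.'' Until the $r=2$ lower bounds are actually carried out, the second item is unproved.
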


Unfortunately, the methods employed in \cite{DHR} to establish statements 1 and 2 of Theorem \ref{thm:main} do not easily extend to compute optimal $(t,r)$ broadcast densities for $r\geq 3$. Hence, a more computational approach is necessary and motivates this work. In this paper, we present a Python program\footnote{The program is available for download from GitHub \cite{DHRprogram}.} to compute upper bounds for the optimal density of $(t,r)$ broadcast domination of $G_\infty$ for any $t>r\geq 1$. 

To compute an upper bound on the $(t,r)$ broadcast density for given $t$ and $r$, our program systematically checks sets of vertices of known broadcast densities and returns the $(t,r)$ broadcast with the lowest density among them. The sets of vertices used by our program are called the \emph{standard patterns}, which we define as follows.

\begin{definition}
The \emph{standard pattern} is defined by the positive integers $d$ and $e$ as
\[p(d,e) = \{(dx+ey, y) : x,y \in \mathbb{Z}\}.\]
\end{definition}

\begin{figure}[h!]
    \begin{center}

    \begin{subfigure}{0.5\textwidth}
    \centering
    \begin{tikzpicture}[scale=0.5]
            \clip (-0.3, -0.3) rectangle (14.3, 8.3);
            \grid[lightgray]{14}{8}
            \vertex{0}{0}{}
            \vertex{4}{0}{}
            \vertex{8}{0}{}
            \vertex{12}{0}{}
            \vertex{2}{1}{}
            \vertex{6}{1}{}
            \vertex{10}{1}{}
            \vertex{14}{1}{}
            \vertex{0}{2}{}
            \vertex{4}{2}{}
            \vertex{8}{2}{}
            \vertex{12}{2}{}
            \vertex{2}{3}{}
            \vertex{6}{3}{}
            \vertex{10}{3}{}
            \vertex{14}{3}{}
            \vertex{0}{4}{}
            \vertex{4}{4}{}
            \vertex{8}{4}{}
            \vertex{12}{4}{}
            \vertex{2}{5}{}
            \vertex{6}{5}{}
            \vertex{10}{5}{}
            \vertex{14}{5}{}
            \vertex{0}{6}{}
            \vertex{4}{6}{}
            \vertex{8}{6}{}
            \vertex{12}{6}{}
            \vertex{2}{7}{}
            \vertex{6}{7}{}
            \vertex{10}{7}{}
            \vertex{14}{7}{}            
            \vertex{0}{8}{}
            \vertex{4}{8}{}
            \vertex{8}{8}{}
            \vertex{12}{8}{}
    \end{tikzpicture}
    \caption{ The standard pattern $p(4, 2)$. }
    \end{subfigure}%
    \begin{subfigure}{0.5\textwidth}
    \centering
    \begin{tikzpicture}[scale=0.5]
            \clip (-0.3, -0.3) rectangle (14.3, 8.3);
            \grid[lightgray]{14}{8}
            \vertex{3}{0}{}
            \vertex{8}{1}{}
            \vertex{0}{2}{}
            \vertex{13}{2}{}
            \vertex{5}{3}{}
            \vertex{10}{4}{}
            \vertex{2}{5}{}
            \vertex{7}{6}{}
            \vertex{12}{7}{}
            \vertex{4}{8}{}
            \vertex{17}{8}{}
            \vertex{9}{9}{}
            \vertex{1}{10}{}
            \vertex{14}{10}{}
    \end{tikzpicture}
    \caption { The standard pattern $p(13, 5)$. }
    \end{subfigure}%
    \end{center}
    \caption{Examples of standard patterns on the infinite grid.}
    \label{(3,1)-optimal}
\end{figure}
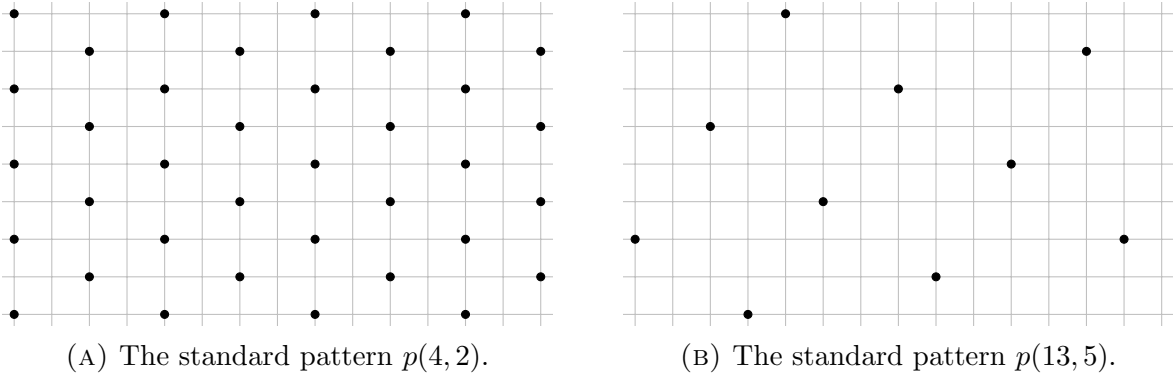

The standard patterns $p(4,2)$ and $p(13,5)$ are depicted in Figure \ref{(3,1)-optimal}. Since, for any positive integer $d$, the standard pattern $p(d,e)$ uses one out of every $d$ vertices on each horizontal line of the grid, it follows that the density of the standard pattern $p(d,e)$ is  $\frac{1}{d}$. We use the notation $\T(d,e)$ to refer to the set of vertices on $G_\infty$ corresponding to the standard pattern $p(d,e)$. Note that $\T(d,e)$ is not necessarily a $(t,r)$ broadcast for any values of $t$ and $r$. When $\T(d,e)$ is in fact a broadcast we call it a \emph{standard broadcast}.

The standard patterns are convenient for computing upper bounds for the optimal density of $(t,r)$ broadcasts because of their regular structure. However, not all standard patterns are $(t,r)$ broadcasts for a given $t$ and $r$. To determine if $\T(d,e)$ is a $(t,r)$ broadcast, we need only to ensure that the sum of signal received is at least $r$ at $d$ specific vertices.  To make this observation precise, we prove the following proposition.

\begin{proposition} 
Let $t$ and $r$ be positive integers. Then $\T(d,e)$ is a $(t,r)$ broadcast if and only if the sum of all signal is at least $r$ for every vertex in the set $\{v_{i,0} \ : \ 0\leq i<d\}$.
\label{prop_stdbroadcast}
\end{proposition}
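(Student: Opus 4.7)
The forward direction is immediate from the definition of a $(t,r)$ broadcast, so the content of the proposition lies in the converse. The plan is to exploit the translational symmetries of the standard pattern $\T(d,e)$ in order to reduce the total signal received at an arbitrary vertex $v_{a,b}$ to the signal received at one of the $d$ designated vertices $v_{i,0}$ with $0 \le i < d$.

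The first step is to verify that $\T(d,e)$ is invariant under the two translations $\tau_1 : (a,b) \mapsto (a+d,b)$ and $\tau_2 : (a,b) \mapsto (a+e, b+1)$. This is a direct check from the definition $p(d,e) = \{(dx+ey,y) : x,y \in \mathbb{Z}\}$: if $(a,b) = (dx+eb, b)$, then $\tau_1(a,b) = (d(x+1)+eb,b)$ and $\tau_2(a,b) = (dx + e(b+1), b+1)$, both of which again lie in $\T(d,e)$. Since each $\tau_i$ is an isometry of $G_\infty$ (translations preserve grid distance), and $\T(d,e)$ is invariant under $\tau_1$ and $\tau_2$, the total signal function
\[
\sigma(v) \;=\; \sum_{T \in \T(d,e)} sig(v,T)
\]
is invariant under $\tau_1$, $\tau_2$, and their inverses.

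The second step is to observe that for any vertex $v_{a,b}$ of $G_\infty$, applying $\tau_2^{-b}$ yields $\sigma(v_{a,b}) = \sigma(v_{a-be,\,0})$, and then applying an appropriate power of $\tau_1$ reduces the first coordinate modulo $d$ to some $i$ with $0 \le i < d$. Hence $\sigma(v_{a,b}) = \sigma(v_{i,0})$, which is at least $r$ by hypothesis. This shows $\T(d,e)$ is a $(t,r)$ broadcast and completes the proof.

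The only mildly delicate point is the invariance argument, which requires observing that even though $\T(d,e)$ is infinite, the rearrangement of summands induced by a translation is a bijection from $\T(d,e)$ onto itself preserving the distance to the translated vertex; this guarantees that the signal sum (a sum of nonnegative terms, only finitely many of which are nonzero for any fixed vertex) is unchanged. Beyond this, the argument is purely symmetry-based and presents no substantive obstacle.
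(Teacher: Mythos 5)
Your proposal is correct and follows essentially the same route as the paper: the paper's proof of the converse is precisely a translation argument, pairing each tower $T_0$ at $(p,q)$ with the tower $T_1$ at $(dx+ey+p,\,y+q)$ and noting the signals agree, which is the bijection-of-summands you describe via $\tau_1$ and $\tau_2$. Your version merely makes the group of translations and the invariance of the signal sum more explicit.
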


\begin{proof}
The sufficiency condition follows directly from the definition of a $(t,r)$ broadcast.

To establish the necessity condition suppose that under the set of broadcast towers $\T(d,e)$ the sum of all signal is at least $r$ for every vertex in the set $\{v_{i,0} \ : \ 0\leq i<d\}$, and let $v_{x',y}$ be a vertex of $G_\infty$. By the definition of a standard pattern, there exist integers $x$ and $k$, $0\leq k<d$, such that \[x'=dx+ey+k.\]
Thus $v_{x',y}$ = $v_{dx+ey+k,y}$ for some integers $x$ and $k$. For every tower $T_0$ located at the point $(p,q)$, by the definition of a standard broadcast, there exists a tower $T_1$ located at the point $(dx+ey+p,y+q)$ with $sig(v_{dx+ey+k,y},T_1)=sig(v_{k,0},T_0)$. As the sum of all signal at $v_{k,0}$ is at least $r$, the sum of all signal at $v_{x',y}$ is at least $r$.
\end{proof}

Our program computes an upper bound on the optimal $(t,r)$ broadcast equal to the minimum density over all standard $(t,r)$ broadcasts. While this result is not necessarily optimal, most known optimal $(t,r)$ broadcasts do correspond to standard patterns, including optimal broadcasts for $(t,1)$ with $t\geq 1$, $(t,2)$ with $t\geq 2$, and $(3,3)$ \cite{blessing2015t,DHR}. The $t=2$, $r=3$ case is the only known $(t,r)$ pair for which there exists an optimal non-standard $(t,r)$ broadcast and no optimal standard $(t,r)$-broadcast \cite{DHR}.

We end this section by remarking that the third statement of Theorem \ref{thm:main} provides further evidence for the conjecture of Blessing et al. that the optimal $(t,1)$ and $(t+1,3)$ broadcasts are identical for all $t\geq 3$. However, their conjecture stated more broadly that the optimal $(t,r)$ and $(t+1,r+2)$ broadcasts were identical for all $t\geq 3$, which is false in general. As an application of our program, we present counterexamples to this conjecture in Section \ref{sec:ex}.

\section{Computer implementation}\label{comp}

\subsection{Implementation Details}

The program takes as input positive integers $t$ and $r$ and computes the density of the optimal standard $(t,r)$ broadcast as detailed in Algorithm \ref{alg}.

\begin{algorithm}
    \caption{Optimal Standard $(t,r)$ Broadcast}
    \label{alg}
    \begin{algorithmic}[1]
    \Procedure{MinDensity}{$t$,$r$}
        \State $d_{max}$ = \sc{DMax}($t$, $r$)
        \State $d_{best} \gets 0$
        \State $d \gets 1$
        \While{ $d\leq d_{max}$ }
            \State $e \gets 0$
            \While{ $e < d$ }
                \If{\sc{IsBroadcast}$(t,r,d,e)$}
                    \State $d_{best} \gets d$
                \EndIf
                \State $e \gets e+1$
            \EndWhile
            \State $d \gets d+1$
        \EndWhile \\
        \Return $\frac{1}{d_{best}}$
    \EndProcedure
    \end{algorithmic}
\end{algorithm}

To compute $d_{\max}$, we first compute the \emph{usable signal} emitted by a broadcast tower of strength $t$ on the infinite grid $G_\infty = (V_\infty, E_\infty)$, which is given by the equation
\[\sum_{v \in V_\infty}\min(sig(v,T), r).\]
This value represents the total signal generated by the tower for all nearby vertices, not including the unnecessary signal provided to each vertex $v$ with $sig(v,T) > r$. We divide the usable signal by $r$ to get a conservative lower bound $\delta_{\min}(t,r)$ on the optimal $(t,r)$ broadcast density. Hence, the maximum distance between tower vertices on the same horizontal line in a standard $(t,r)$ broadcast, is equal to
\[d_{\max} \ = \ \left\lfloor \frac{1}{\delta_{\min}(t,r)} \right\rfloor. \]

For each $d \in \{1,2, ... \ d_{\max}\}$, the program iterates through each $e \in \{0,1, \ldots,  \ d-1\}$ and checks to see if $\T(d,e)$ is a standard $(t,r)$ broadcast. The program then returns the optimal standard broadcast and its density.

To check if a set of vertices $\T(d,e)$ is a $(t,r)$ broadcast, the program creates a grid window $W$ consisting of every vertex within distance $t$ of any vertex $v \in \{v_{i,0} \ : \ 0\leq i<d\}$. Thus every tower in $\T(d,e)$ that contributes signal to a vertex in $\{v_{i,0} \ : \ 0\leq i<d\}$ is included in the window. The program then places towers at each vertex $T \in W\cap\T(d,e)$ and computes the total signal of each vertex $v \in \{v_{i,0} \ : \ 0\leq i<d\}$. By Proposition \ref{prop_stdbroadcast}, if each vertex in this set has total signal at least $r$, then $\T(d,e)$ is a standard $(t,r)$ broadcast.

\subsection{Computations}\label{sec:ex}
Blessing et al. prove that the optimal $(3,3)$ and $(2,1)$ broadcast densities are equal on large grids \cite{blessing2015t}, and further conjecture that the optimal $(t,r)$ and $(t+1,r+2)$ broadcast densities are equal for all $(t,r)$. However, with the exception of the $r=1$ case, the conjecture is broadly false.

\renewcommand{\arraystretch}{1.5}
{\setlength\tabcolsep{4pt} 
\begin{footnotesize}
 \begin{centering}
 \begin{longtable}
 {||c | l  l  l  l  l  l  l  l||} 
 \hline
 $t \setminus r$ & 1 & 2 & 3 & 4 & 5 & 6 & 7 & 8 \\
 \hline\hline
 1 & $\T$(1,0) & N/A             & N/A         & N/A          & N/A         & N/A         & N/A         & N/A\\
 \hline
 2 & $\T$(5,2) & $\T$(3,1)       & $\T$(1,0)   & $\T$(1,0)    & $\T$(1,0)   & $\T$(1,0)   & N/A         & N/A         \\
 \hline
 3 & $\T$(13,5) & $\T$(8,3)      & $\T$(5,1)   & $\T$(4,1)    & $\T$(3,0)   & $\T$(2,0)   & $\T$(2,0)   & $\T$(2,0)   \\
 \hline
 4 & $\T$(25,7) & $\T$(18,5)     & $\T$(13,5)  & $\T$(10,3)   & $\T$(8,2)   & $\T$(7,2)   & $\T$(5,1)   & $\T$(5,1)   \\
 \hline
 5 & $\T$(41,9) & $\T$(32,7)     & $\T$(25,7)  & $\T$(18,4)   & $\T$(14,4)  & $\T$(13,3)  & $\T$(11,2)  & $\T$(10,2)  \\
 \hline
 6 & $\T$(61,11) & $\T$(50,9)    & $\T$(41,9)  & $\T$(34,13)  & $\T$(26,10) & $\T$(22,5)  & $\T$(19,7)  & $\T$(17,4)  \\
  \hline 
 7 & $\T$(85,13) & $\T$(72,11)   & $\T$(61,11) & $\T$(50,9)   & $\T$(42,16) & $\T$(36,15) & $\T$(29,12) & $\T$(26,5)  \\
 \hline
 8 & $\T$(113,15) & $\T$(98,13)  & $\T$(85,13) & $\T$(74,31)  & $\T$(62,26) & $\T$(54,15) & $\T$(43,12) & $\T$(39,16) \\
 \hline
 9 & $\T$(145,17) & $\T$(128,15) & $\T$(113,15)& $\T$(98,13)  & $\T$(86,36) & $\T$(76,21) & $\T$(65,18) & $\T$(58,17) \\
  \hline
 10 & $\T$(181,19)& $\T$(162,17) & $\T$(145,17)& $\T$(130,57) & $\T$(114,50)&$\T$(102,39)& $\T$(89,34) & $\T$(78,17) \\
  \hline
 11 & $\T$(221,21)& $\T$(200,19) & $\T$(181,19)& $\T$(162,17) & $\T$(146,64)&$\T$(132,39)& $\T$(115,34)& $\T$(106,23)\\
  \hline
 12 & $\T$(265,23)& $\T$(242,21) & $\T$(221,21) & $\T$(202,91)& $\T$(182,82)&$\T$(166,49)& $\T$(149,44)& $\T$(134,29)\\
  \hline
 13 & $\T$(313,25)& $\T$(288,23) & $\T$(265,23) & $\T$(242,21)& $\T$(222,100)&$\T$(204,75)& $\T$(185,68)& $\T$(170,47)\\
  \hline
 14 & $\T$(365,27)& $\T$(338,25) & $\T$(313,25) & $\T$(290,133)& $\T$(266,122)&$\T$(246,75) & $\T$(223,68)& $\T$(206,47)\\
  \hline
 15 & $\T$(421,29)& $\T$(392,27) & $\T$(365,27) & $\T$(338,25)& $\T$(314,144)&$\T$(292,89) & $\T$(269,82)& $\T$(250,57)\\
  \hline
\caption{Best standard $(t,r)$ broadcasts for $1\leq t\leq 15$ and $1\leq r\leq 8$.}
\label{bigtable}
\end{longtable}
\end{centering}
\end{footnotesize}
}
Table \ref{bigtable} lists the best standard $(t,r)$ broadcasts for all $1 \leq r \leq 8$ and $1 \leq t \leq 15$, as computed by our program. If multiple standard broadcasts achieve the best broadcast density, the broadcast $\T(d,e)$ with the lowest offset value $e$ is listed. For instance, although $\T(5,1)$, $\T(5,2)$, $\T(5,3)$, and $\T(5,4)$ are optimal $(3,3)$ broadcasts, only $\T(5,1)$ is listed. Recall that the density of the standard $(t,r)$ broadcast $\T(d,e)$ is $\frac{1}{d}$.

Note that the first two statements of Theorem \ref{thm:main} confirm that the standard $(t,1)$ and $(t,2)$ broadcasts listed in the table are optimal, as proved in \cite{DHR}. The data  provide further evidence that the conjecture of Blessing et al. is true in some cases. First, the optimal $(t,1)$ broadcasts are best standard $(t+1,3)$ broadcasts, providing support for the limited conjecture that the optimal $(t,r)$ and $(t+1,r+2)$ broadcast densities are equal in the $r=1$ case \cite{DHR}. Furthermore, the optimal $(t,2)$ broadcasts appear to be equal in density to the best standard $(t+1,4)$ broadcasts when $t$ is even and greater than $3$, indicating a second case in which the conjecture of Blessing et al. might hold.

However, Table \ref{bigtable} provides a variety of counterexamples that indicate the conjecture is false. Although the conjecture would entail that the optimal $(t,1)$ and $(t+2,5)$ broadcast densities are equal, the densities of the optimal $(t,1)$ broadcasts are in fact consistently greater than the corresponding densities of the best standard $(t+2,5)$ broadcasts. Figure \ref{fig:conj_false} displays an optimal $(2,1)$ broadcast and a $(4,5)$ broadcast with lower broadcast density. Additionally, the densities of the optimal $(t,2)$ broadcasts are often greater than those of the best standard $(t+1,4)$ broadcasts and consistently greater than those of the best standard $(t+2,6)$ broadcasts. Thus, establishing that the conjecture of Blessing et al. is false in the general case, the best standard broadcasts provide tighter upper bounds on the optimal $(t,r)$ broadcasts. 

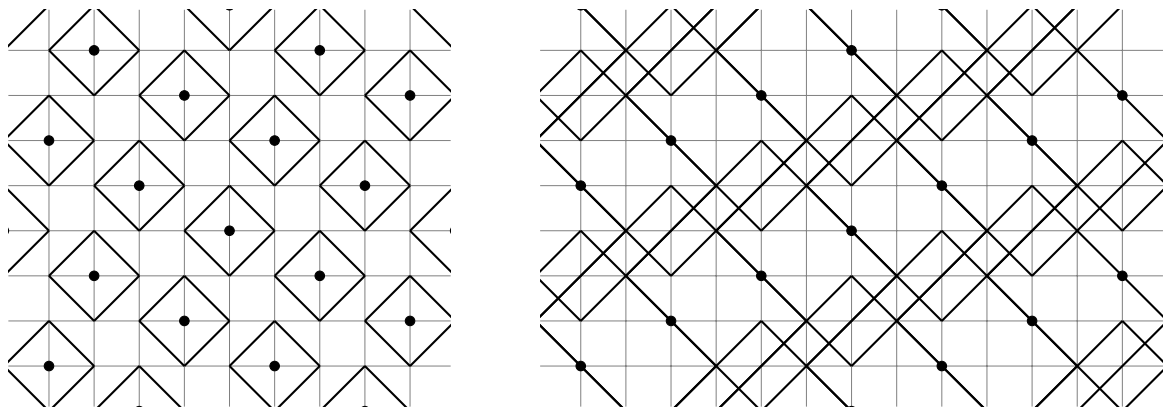
\begin{figure}[h!]
    \begin{subfigure}{0.5\textwidth}
    \centering
    \begin{tikzpicture}[scale=0.6]
        \clip (-0.9, -0.9) rectangle (8 + 0.9, 7 + 0.9);
        \grid{8}{7}
        \tower{0}{0}{1}{}
        \tower{5}{0}{1}{}
        \tower{10}{0}{1}{}
        
        \tower{3}{1}{1}{}
        \tower{8}{1}{1}{}
        \tower{3}{6}{1}{}
        \tower{8}{6}{1}{}
        
        \tower{1}{2}{1}{}
        \tower{6}{2}{1}{}
        \tower{1}{7}{1}{}
        \tower{6}{7}{1}{}
        
        \tower{-1}{3}{1}{}
        \tower{4}{3}{1}{}
        \tower{9}{3}{1}{}
        \tower{-1}{8}{1}{}
        \tower{4}{8}{1}{}
        \tower{9}{8}{1}{}
        
        \tower{2}{4}{1}{}
        \tower{7}{4}{1}{}
        \tower{2}{-1}{1}{}
        \tower{7}{-1}{1}{}
        
        \tower{0}{5}{1}{}
        \tower{5}{5}{1}{}
        \tower{10}{5}{1}{}
    \end{tikzpicture}
    \caption{The standard $(2,1)$ broadcast $\T(5,3)$.}
    \end{subfigure}%
    \begin{subfigure}{0.5\textwidth}
    \centering
    \begin{tikzpicture}[scale=0.6]
        \clip (-0.9, -0.9) rectangle (12 + 0.9, 7 + 0.9);
        \grid{12}{7}
        \tower{0}{0}{3}{}
        \tower{8}{0}{3}{}
        \tower{0}{4}{3}{}
        \tower{8}{4}{3}{}
        \tower{0}{8}{3}{}
        \tower{8}{8}{3}{}
        
        \tower{2}{1}{3}{}
        \tower{10}{1}{3}{}
        \tower{2}{5}{3}{}
        \tower{10}{5}{3}{}
        \tower{2}{9}{3}{}
        \tower{10}{9}{3}{}
        
        \tower{4}{-2}{3}{}
        \tower{12}{-2}{3}{}
        \tower{4}{2}{3}{}
        \tower{12}{2}{3}{}
        \tower{4}{6}{3}{}
        \tower{12}{6}{3}{}
        
        \tower{-2}{-1}{3}{}
        \tower{6}{-1}{3}{}
        \tower{-2}{3}{3}{}
        \tower{6}{3}{3}{}
        \tower{-2}{7}{3}{}
        \tower{6}{7}{3}{}
    \end{tikzpicture}
    \caption{The standard $(4,5)$ broadcast $\T(8,2)$.}
    \end{subfigure}
    
    \caption{The standard $(4,5)$ broadcast $\T(8,2)$ has a lower broadcast density than the optimal $(2,1)$ broadcast $\T(5,3)$. }
    \label{fig:conj_false}
\end{figure}


\bibliography{main}

\begin{thebibliography}{1}

\bibitem{blessing2015t}
David Blessing, Katie Johnson, Christie Mauretour, and Erik Insko.
\newblock On {$(t,r)$} broadcast domination numbers of grids.
\newblock {\em Discrete Appl. Math.}, 187:19--40, 2015.

\bibitem{chang1992domination}
Tony~Yu Chang.
\newblock {\em Domination numbers of grid graphs}.
\newblock ProQuest LLC, Ann Arbor, MI, 1992.
\newblock Thesis (Ph.D.)--University of South Florida.

\bibitem{DHR}
Benjamin~F. Drews, Pamela~E. Harris, and Timothy~W. Randolph.
\newblock Optimal (t,r)-broadcasts on the infinite grid.
\newblock arXiv:1711.11116, 2017.

\bibitem{dunbar2006broadcasts}
Jean~E. Dunbar, David~J. Erwin, Teresa~W. Haynes, Sandra~M. Hedetniemi, and
  Stephen~T. Hedetniemi.
\newblock Broadcasts in graphs.
\newblock {\em Discrete Appl. Math.}, 154(1):59--75, 2006.

\bibitem{gonccalves2011domination}
Daniel Gon\c{c}alves, Alexandre Pinlou, Micha\"el Rao, and St\'ephan
  Thomass\'e.
\newblock The domination number of grids.
\newblock {\em SIAM J. Discrete Math.}, 25(3):1443--1453, 2011.

\bibitem{griggs1992r}
Jerrold~R. Griggs and Joan~P. Hutchinson.
\newblock On the {$r$}-domination number of a graph.
\newblock {\em Discrete Math.}, 101(1-3):65--72, 1992.
\newblock Special volume to mark the centennial of Julius Petersen's ``Die
  Theorie der regul\"aren Graphs'', Part II.

\bibitem{haynes1998fundamentals}
Teresa~W. Haynes, Stephen~T. Hedetniemi, and Peter~J. Slater.
\newblock {\em Fundamentals of domination in graphs}, volume 208 of {\em
  Monographs and Textbooks in Pure and Applied Mathematics}.
\newblock Marcel Dekker, Inc., New York, 1998.

\bibitem{DHRprogram}
Timothy~W. Randolph.
\newblock standard-tr-broadcast.
\newblock {\em GitHub Code download:
  \url{https://github.com/twrand/standard-tr-broadcast}}, 2017.

\end{thebibliography}
\bibliographystyle{plain}

\end{document}